\documentclass{article}
\usepackage{csquotes}
\usepackage[utf8]{inputenc}
\usepackage[backend=biber,style=alphabetic]{biblatex}
\usepackage[english]{babel}
\usepackage[margin=1in]{geometry}
\usepackage{amsmath,amssymb,amsfonts,amsthm,mathtools}
\usepackage{braket} 
\usepackage{xcolor}
\usepackage{tikz}
\usetikzlibrary{fit}
\usepackage{quantikz}
\usepackage[inline]{enumitem}

\usepackage{hyperref}
\usepackage[nameinlink]{cleveref}

\hypersetup{colorlinks=true,linkcolor=blue,filecolor=magenta,urlcolor=cyan}
\newtheorem{theorem}{Theorem}[section]

\newtheorem{lemma}[theorem]{Lemma}
\theoremstyle{remark}

\theoremstyle{definition}
\newtheorem{definition}{Definition}[section]
\newtheorem{prop}{Proposition}

\newcommand{\R}{\mathbb{R}}
\newcommand{\C}{\mathbb{C}}

\newcommand{\vdx}{\vert dx\vert}
\title{Delta functions, a local coarea formula and marginal distributions}
\author{Ranee Brylinski\thanks{She passed away before this paper was submitted} and Jean-Luc Brylinski}
\date{October 2026}
\begin{document}
\maketitle
\begin{abstract}
 We define multidimensional delta functions $\delta(f)$ as generalized functions by pulling back the Dirac delta function $\delta(x)$, as well as products of such. We prove various formulas including a local version "on shell" of the coarea formula and we use delta functions to compute marginal probability distributions on spheres.

\end{abstract}

\section{Introduction}\label{sec:intro}

The purpose of this paper is to facilitate for mathematicians the use  of the multi-dimensional $\delta$ functions
in computing volumes and multiple integrals. As an example, here is a formula on page 67 of a paper of Sommers and Zyckowski \cite{S-Z}

\begin{equation}
\int D\rho\frac{\delta(Tr \rho-1)}{det\rho^{1/2}}\theta(\rho)\, \prod_{\nu <\mu}
\frac{2}{\rho_{\nu}+\rho_{\mu}}
\end{equation}
    an integration carried over the set of positive hermitian matrices, the term $\delta(Tr \rho-1)$
having the effect  of localizing the integrand to the matrices of trace $1$, i.e. the set of mixed quatum states.
\vskip .12 in

We will give here a direct construction of a generalized function $\delta(f)$ for any smooth function $f$ on a smooth manifold $M$ of dimension $n$, whose differential vanishes nowhere on the zero locus $Y$ of $f$.

\begin{prop}\label{def}
There is a canonical construction of a generalized function $\delta(f)$ such that for any $(n-1)$-form form $\nu$ on a neighborhood of $Y$ in $M$ and any smooth function $g$ we have
\begin{equation}
\int_M\delta(f)g\,df\wedge\nu=\int_Y g\nu
\end{equation}
\end{prop}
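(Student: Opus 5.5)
Define $\delta(f)$ as the pullback $f^*\delta$ of the Dirac distribution $\delta(x)$ on $\R$. Here "generalized function" means a continuous linear functional on compactly supported smooth densities ($n$-forms, with $M$ oriented near $Y$, or densities otherwise). The plan has three steps: construct $f^*\delta$ as a limit, compute it in local coordinates adapted to $f$, and then check that the result is independent of all choices.

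\textbf{Construction.} Since $df\neq 0$ on $Y$, $0$ is a regular value of $f$ near $Y$, so $Y$ is a smooth hypersurface. Choose an approximate identity $\phi_\epsilon(x)=\epsilon^{-1}\phi(x/\epsilon)$ with $\int\phi=1$. For a compactly supported $n$-form $\omega$, set
\[
\langle\delta(f),\omega\rangle=\lim_{\epsilon\to 0}\int_M \phi_\epsilon(f)\,\omega .
\]
Away from $Y$ the integrand vanishes for small $\epsilon$. By a partition of unity it therefore suffices to treat $\omega$ supported in a chart near a point $p\in Y$. Since $df_p\neq 0$, we can take $f$ as the first coordinate there: $(x_1,\dots,x_n)=(f,y_2,\dots,y_n)$, with $Y=\{x_1=0\}$.

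\textbf{Local computation.} Write $\omega = h\,dx_1\wedge dy_2\wedge\cdots\wedge dy_n$. Fubini gives
\[
\int\phi_\epsilon(x_1)H(x_1)\,dx_1, \qquad H(t)=\int_{\{f=t\}} h\,dy_2\wedge\cdots\wedge dy_n .
\]
As $\epsilon\to 0$ this tends to $H(0)$, because $H$ is continuous, indeed smooth. The limit exists, is independent of $\phi$, and satisfies $|\langle\delta(f),\omega\rangle|\le C\sup|h|$ on a fixed compact set, so it is a distribution (of order $0$).

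\textbf{The formula and independence of choices.} Now take $\omega=g\,df\wedge\nu$. In the chart, $df\wedge\nu=dx_1\wedge\nu|_{x_1\ \mathrm{fixed}}$, since any $dx_1$ component of $\nu$ is killed by the wedge with $dx_1$. Hence $H(0)=\int_Y g\,\nu$, and the right side depends only on $\nu|_Y$. This consistency is the main point to check. Two forms $\nu,\nu'$ with $df\wedge\nu=df\wedge\nu'$ differ by $df\wedge\beta$ near $Y$, and $df\wedge\beta$ restricts to zero on $Y$. So $\int_Y g\nu$ depends only on $df\wedge\nu$ and the formula is well posed. Here $Y$ carries the orientation for which $df\wedge(\text{oriented frame of }Y)$ is positive. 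Conversely, every compactly supported $n$-form near $Y$ can be written as $df\wedge\nu$, which shows that the formula characterizes $\delta(f)$ uniquely; this is the "canonical" part of the statement. Independence of the coordinate chart and of the partition of unity then follows, because the local answer $\int_Y g\nu$ is intrinsic.

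The main obstacle is making sure that the orientation and sign conventions in the local Fubini step agree with $\int_Y$. One also has to handle non-compactly supported $\nu$. For that I would require $g\nu$ to have compact support on $Y$, or equivalently interpret the identity for $g$ compactly supported.
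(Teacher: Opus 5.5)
Your argument is correct, and its core computation is the paper's: both evaluate $\delta(f)=f^*\delta$ on $\omega=g\,df\wedge\nu$ by integrating $\omega$ over the level sets of $f$ and reading off the value at $t=0$. Your $H(t)\,dt$ is exactly the paper's fiber integral $\int_f\omega$.

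Where you differ is in how the pullback is defined. The paper takes $f^*$ to be the transpose of the fiber-integration map $\int_f\colon Dens_c(M)\to Dens_c(\R)$. It simply asserts that this map is well defined and continuous, so the proof reduces to the chain $\langle f^*\delta,\omega\rangle=\langle\delta,\int_f\omega\rangle=\int_Y g\nu$. You instead realize $f^*\delta$ as the weak limit of $\phi_\epsilon(f)$. You then prove existence of the limit, independence of $\phi$, and the order-zero bound by hand, using Fubini in $f$-adapted charts. The two definitions agree because $\int_M\phi_\epsilon(f)\,\omega=\int_\R\phi_\epsilon\,\int_f\omega$.

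Your route is more elementary and self-contained, and it handles points the paper glosses over:
\begin{enumerate*}[label=(\roman*)]
\item $df$ need only be nonzero near $Y$;
\item the orientation convention on $Y$ when you work with forms rather than densities;
\item $g\nu$ must have compact support.
\end{enumerate*}
For (i), the claim that the integrand vanishes away from $Y$ for small $\epsilon$ requires $\phi$ to be compactly supported, so say so explicitly. The paper's transpose formulation is shorter and feeds directly into its later uses: products $\delta(f_1)\cdots\delta(f_k)$ for submersions, and the total area property, where $f^*$ must commute with weak integrals.

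Two small remarks. First, your well-posedness check is automatic: the left side depends only on $g\,df\wedge\nu$, and your local computation already identifies it with $\int_Y g\nu$. Second, your closing uniqueness claim also needs $\mathrm{supp}\,\delta(f)\subset Y$ when $df$ vanishes somewhere off $Y$. Forms that are nonzero at such points are not of the form $df\wedge\nu$, so the formula says nothing about $\delta(f)$ there.
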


An equivalent formulation, maybe more palatable,
is that for any volume form $\omega$ on $M$, we have

\begin{equation}
\int_M\, \delta(f)\vert\omega\vert=\int_Y\, \bigl[\vert\frac{\omega}{df}\vert\bigr]_{/Y}
\end{equation}

where $\large\frac{\omega}{df} $ is the Gelfand-Leray
$(n-1)$-form such that

\begin{equation}
df\wedge \frac{\omega}{df}=\omega
\end{equation}
\vspace{.112in}

The $(n-1)$-form $\frac{\omega}{df}$ is not unique, but its restriction to $Y=f^{-1}(0)$ is unique, see for instance \cite{A-GZ-V}.

\begin{prop}\label{local}
 Given local coordinates $(x_1,\cdots,x_n)$ on an open subset $U$ of $M$ such that $\frac{\partial f}{\partial x_1}\neq 0$ on $U$, we have

 \begin{equation}
 \int_M \delta(f)\, |dx_1\cdots dx_n|
 = \int_Y \left|\left(\frac{\partial f}{\partial x_1}\right)\right|^{-1}\, |dx_2\cdots dx_n|\Big\rvert_Y.
 \end{equation}
\end{prop}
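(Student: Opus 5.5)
The plan is to reduce Proposition~\ref{local} to the defining property in Proposition~\ref{def}, or equivalently to the Gelfand--Leray formulation, by exhibiting an explicit Gelfand--Leray form for $\omega = dx_1\wedge\cdots\wedge dx_n$ on $U$. Implicitly the integrand is supported in $U$; if needed, we first multiply by a smooth cutoff $g$ supported in $U$ and use the version of Proposition~\ref{def} that carries a test function $g$.

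On $U$ we have $\partial f/\partial x_1\neq 0$. Put
\[
\nu = \left(\frac{\partial f}{\partial x_1}\right)^{-1} dx_2\wedge\cdots\wedge dx_n ,
\]
which is a smooth $(n-1)$-form on $U$. Expanding
\[
df=\sum_i \frac{\partial f}{\partial x_i}\,dx_i ,
\]
every term with $i\ge 2$ dies when wedged with $dx_2\wedge\cdots\wedge dx_n$. Hence
\[
df\wedge\nu = dx_1\wedge\cdots\wedge dx_n ,
\]
so $\nu$ is a Gelfand--Leray form $\omega/df$. Proposition~\ref{def}, applied with this $\nu$ and test function $g$, then gives
\[
\int_M \delta(f)\,g\,dx_1\cdots dx_n = \int_Y g\,\nu .
\]

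The remaining step, which is the only delicate one, is to pass from oriented forms to densities, i.e.\ to insert the absolute values. I would split $U$ into the two open pieces where $\partial f/\partial x_1>0$ and where it is $<0$; by connectedness of components, the sign is constant on each component. On each piece, $|dx_1\cdots dx_n|$ equals $\pm\,dx_1\wedge\cdots\wedge dx_n$ for the coordinate orientation. We orient $Y$ as the boundary-type orientation determined by $df\wedge(\cdot)$, which is the convention implicit in Proposition~\ref{def}. With that orientation, $\int_Y \nu$ equals the integral of the density $|\nu|_Y| = |\partial f/\partial x_1|^{-1}|dx_2\cdots dx_n|_Y$: the sign of $(\partial f/\partial x_1)^{-1}$ exactly compensates the orientation of $(x_2,\dots,x_n)$ as coordinates on $Y$. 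These are indeed coordinates on $Y$ by the implicit function theorem, since $\partial f/\partial x_1\ne0$.

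Summing over the two pieces, and removing the cutoff via a partition of unity, gives the stated formula. I expect the orientation and sign bookkeeping to be the main point to check carefully. The algebraic identity $df\wedge\nu=\omega$ itself is immediate.
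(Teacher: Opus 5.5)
Your proposal is correct and is essentially the paper's argument: the paper likewise just notes that $\nu=\bigl(\partial f/\partial x_1\bigr)^{-1}dx_2\wedge\cdots\wedge dx_n$ satisfies $df\wedge\nu=dx_1\wedge\cdots\wedge dx_n$, so $\nu$ is an admissible choice of form in Proposition~\ref{def}. Your extra orientation bookkeeping, which the paper omits, is sound; in fact $\nu$ is positive on all of $Y\cap U$ for the $df$-coorientation, so splitting $U$ by the sign of $\partial f/\partial x_1$ is not even needed.
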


\section{$\delta(x)$ as a generalized function}

Next year will be the hundred year anniversary of Dirac's function $\delta(x)$ on the real line $\R$, As a \underline{generalized function} it is characterized by its integration against a smooth compactly-supported $1$-form $f|dx|$

\begin{equation}
\int_{\R} \delta(x) f(x) \vdx= f(0)
\end{equation}

This was exactly the way Dirac defined it \cite{dirac}, as  a "function" that you cannot evaluate  at points but that you can integrate against a function $f(x)$.

Laurent Schwartz \cite{schw} viewed the delta function in the same way, with the addition that the functional on $f(x)$ is continuous in a suitable sense, that we recall later.

We take here the approach from \cite{G-S} that $\delta(x)$ is a generalized function, that is to say, a functional on smooth compactly-supported densities $d\mu = f(x)\vdx$.

\begin{equation}
\mu\mapsto\int_{\R}\delta(x)\, d\mu(x)
\end{equation}

The quantity $\delta(x)\, \vert dx\vert$ is itself a probability measure supported at $0$ (the unique one), called the Dirac measure.

The scaling invariance of the Dirac measure yields

\begin{equation}
\delta(ax)=|a|^{-1}\delta(x)
\end{equation}

We note the important and very consequential property of $\delta(x)$, often called the \underline{total area property}.

\begin{prop}
We have the equality between generalized functions over $\R$
\begin{equation}
\int_{\R}\, \delta(x-a)\vert da\vert =1
    \end{equation}
  Here $\delta(x-a)$ is the generalized function which  evaluates to $f(a)$ when evaluated  against a smooth compactly density $f(x)\vert dx\vert$.  Equivalently, translations on $\R$ act naturally on smooth compactly supported  densities, hence dually on generalized functions.
  
  The integral above is understood as a weak integral (aka Pettis integral).
\end{prop}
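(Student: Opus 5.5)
The plan is to unwind the definition of the weak (Pettis) integral and reduce the claimed identity to Fubini's theorem for smooth compactly supported functions on $\R^2$. An identity between generalized functions means equality after pairing with every smooth compactly supported density $d\mu=f(x)\vdx$. So we must show that the generalized function $T=\int_{\R}\delta(x-a)\vert da\vert$, defined weakly by
\begin{equation*}
\langle T,\mu\rangle=\int_{\R}\langle \delta(x-a),\mu\rangle\,\vert da\vert ,
\end{equation*}
coincides with the constant function $1$. That is, we need $\langle T,\mu\rangle=\int_{\R} f(x)\vdx$ for every such $\mu$.

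First I will check that the weak integral makes sense. By the definition recalled in the statement, $\langle\delta(x-a),\mu\rangle=f(a)$. As a function of $a$ this is smooth and compactly supported, hence integrable against $\vert da\vert$, so the Pettis integral exists as a linear functional on densities. Second, I will compute it directly:
\begin{equation*}
\langle T,\mu\rangle=\int_{\R} f(a)\,\vert da\vert=\int_{\R} f(x)\vdx=\int_{\R} 1\cdot d\mu ,
\end{equation*}
which is exactly the pairing of the constant generalized function $1$ with $\mu$. Third, for continuity in the sense of Schwartz, I will note that $\mu\mapsto\int f\vdx$ is continuous on test densities, so $T$ is a genuine generalized function equal to $1$.

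To make the argument less tautological and more in the spirit of the translation remark, I will also phrase it via the translation action. Writing $\tau_a$ for translation, $\delta(x-a)=\tau_a\delta$ and $\langle\tau_a\delta,\mu\rangle=\langle\delta,\tau_{-a}\mu\rangle$. Integrating over $a$ then amounts to convolving $\delta$ with the constant $1$. Equivalently, one pairs a mollified version $\delta_\epsilon(x-a)$ with $\mu$, applies Fubini on $\R^2$ to the compactly supported integrand $\delta_\epsilon(x-a)f(x)$, uses $\int\delta_\epsilon(x-a)\vert da\vert=1$, and lets $\epsilon\to0$ by dominated convergence.

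The main obstacle is not computational. It is making precise in which sense the integral of a family of generalized functions is taken, and justifying the interchange of pairing and integration. The Pettis definition settles this by fiat, and the mollifier/Fubini route shows this is the natural interpretation, consistent with any strong-limit definition.
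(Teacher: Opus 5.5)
Your proposal is correct and follows the paper's proof: both use the definition of the weak integral to pair $\int_{\R}\delta(x-a)\vert da\vert$ with a test density $f(x)\vert dx\vert$, use $\langle\delta(x-a),\mu\rangle=f(a)$, and identify $\int_{\R} f(a)\vert da\vert$ with the pairing of the constant function $1$ against $\mu$. Your extra checks (existence and continuity of the resulting functional, and the mollifier/Fubini remark) are harmless supplements that the paper's argument does not need.
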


\begin{proof}
Since the integral of generalized funcions  in question is a weak (aka Pettis) integral, we compute it against a test smooth density $g(x)\,|dx|$:
\begin{equation}
\int_{\R}\!\left(\int_{\R}\delta(x-a)\,|da|\right) g(x)\,|dx|
=\int_{\R}\!|da|\int_{\R}\delta(x-a)\,g(x)\,|dx|
=\int_{\R} g(a)\,|da|.
\end{equation}
Since this equals $\int_{\R} g(x)\,|dx|$ for all $g$, we have $\int_{\R}\delta(x-a)\,|da|=1$ as generalized functions.
\end{proof}

\section{Construction of $\delta(f)$}

The method is very simple: define $\delta(f)=f^*\delta(x)$ using pullback of generalized functions. To make sense of this, we first need to define generalized functions.

\begin{definition}
A  generalized function $g$ on a smooth manifold $M$ is a continuous functional on the space $Dens_c(M)$ of smooth compactly supported densities on $M$., where $Dens_c(M)$ is vieweed a the direct limit of the Frechet spaces $Dens_K(M)$ of smooth densities supportee on compact subsets $K$ of $M$. The space of generalized functions on $M$ is denoted by $\mathcal D^{\prime}(M)$

\end{definition}

By definition of the direct limit topology, a linear functional on $Dens_c(M)$ is continuous  if its restriction to each $Dens_K(M)$ is continuous.

\begin{prop}
If the function $f$  has nowhere vanishing differentiaL  $df$, there is a well-define fiber integration map $\int_f::Dens_c(M)\to Dens_c(\R)$.
Then the pull-back map $f^*: \mathcal D^{\prime}(\R)\to \mathcal D^{\prime}(M)$ is well-defined and continuous.

    \end{prop}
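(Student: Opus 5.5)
The plan is to build the fiber integration map $\int_f$ locally using coordinates adapted to $f$. Then I define $f^*$ as its transpose and check continuity from the direct limit topology.

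\textbf{Step 1: local normal form.} Since $df$ vanishes nowhere, the implicit function theorem gives, around each point $p\in M$, coordinates $(t,y_2,\dots,y_n)$ on an open set $U_p$ with $t=f$. In these coordinates a density $\mu=h(t,y)\,|dt\,dy_2\cdots dy_n|$ supported in $U_p$ has a natural pushforward
\begin{equation}
\Bigl(\int_f\mu\Bigr)(t)=\Bigl(\int h(t,y)\,|dy_2\cdots dy_n|\Bigr)|dt|.
\end{equation}
This is smooth in $t$ by differentiation under the integral sign, since $h$ is smooth and compactly supported. Its support lies in $f(\mathrm{supp}\,\mu)$, which is compact.

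\textbf{Step 2: intrinsic definition.} I will characterize $\int_f\mu$ intrinsically by
\begin{equation}
\int_{\R}\phi\,\int_f\mu=\int_M (\phi\circ f)\,\mu \quad\text{for all } \phi\in C^\infty(\R).
\end{equation}
This identity determines $\int_f\mu$ uniquely, so the local constructions agree on overlaps and are coordinate independent. For a general $\mu\in Dens_c(M)$, I take a finite partition of unity $\{\chi_i\}$ subordinate to the $U_p$ covering $\mathrm{supp}\,\mu$ and set $\int_f\mu=\sum_i\int_f(\chi_i\mu)$. The result is independent of choices by the characterization, and it is linear.

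\textbf{Step 3: continuity of $\int_f$.} I expect this to be the main technical point, though it is routine. Fix a compact $K\subset M$ and cover it by finitely many charts with fixed cutoffs $\chi_i$. For $\mu\in Dens_K(M)$ the image lies in $Dens_{f(K)}(\R)$. In each chart, $\partial_t^k$ of the fiber integral is the fiber integral of $\partial_t^k(\chi_i h)$, so
\begin{equation}
\sup_t\Bigl|\partial_t^k\!\int_f\mu\Bigr|\le C_{K,k}\sum_{|\alpha|\le k}\sup|\partial^\alpha h|.
\end{equation}
Here $C_{K,k}$ depends only on the volumes of the fibers of the compact chart pieces and the Leibniz constants of the cutoffs. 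Hence $\int_f:Dens_K(M)\to Dens_{f(K)}(\R)$ is continuous for every $K$. By the universal property of the direct limit, $\int_f$ is continuous on $Dens_c(M)$ into $Dens_c(\R)$.

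\textbf{Step 4: pullback.} For $u\in\mathcal D'(\R)$ I define $f^*u=u\circ\int_f$. This is a composition of continuous linear maps, so it is a continuous functional on $Dens_c(M)$, that is, an element of $\mathcal D'(M)$. By the identity in Step 2, when $u$ is a smooth function this agrees with the usual $u\circ f$, which justifies the name. Continuity of $f^*:\mathcal D'(\R)\to\mathcal D'(M)$ in the weak-$*$ topologies is immediate: $\langle f^*u,\mu\rangle=\langle u,\int_f\mu\rangle$, so weak-$*$ convergence of $u_j$ implies that of $f^*u_j$. As the transpose of a continuous map it is also continuous for the strong topologies. Applying this with $u=\delta(x)$ gives $\delta(f)$.
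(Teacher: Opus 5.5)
Your proposal is correct and follows the same route as the paper. The paper only asserts, in the proof of the analogous statement for submersions $F$, that fiber integration is well-defined and continuous for the inductive limit topologies, and then takes the pullback to be the dual map. You supply the details it omits: adapted coordinates with $t=f$, the characterization $\int_{\R}\phi\,\int_f\mu=\int_M(\phi\circ f)\,\mu$ giving independence of choices, the seminorm estimates on each $Dens_K(M)$, and $f^*$ as the transpose of $\int_f$.
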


    \begin{prop}
    
    Under the assumption that $df$ vanishes nowhere on the locus $Y=f^{-1}(0)$, we can define $\delta(f)=f^*\delta(x)$.

        \end{prop}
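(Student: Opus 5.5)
The plan is to reduce to the previous proposition by working on the open set where $df\neq 0$. Let $U=\{p\in M: df_p\neq 0\}$. This set is open, and by hypothesis it contains $Y$.

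\textbf{Step 1: define $\delta(f)$ on $U$.} On $U$ the previous proposition applies to $f|_U$. Fiber integration $\int_{f|_U}: Dens_c(U)\to Dens_c(\R)$ is well defined, so $(f|_U)^*\delta(x)\in\mathcal D'(U)$ is defined by
\[
\langle (f|_U)^*\delta,\mu\rangle=\Big\langle \delta,\int_{f|_U}\mu\Big\rangle .
\]
Concretely, near a point of $Y$ we pick coordinates with $x_1=f$ (submersion theorem). Then fiber integration integrates out $x_2,\dots,x_n$, and pairing with $\delta$ evaluates the result at $0$.

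\textbf{Step 2: define $\delta(f)$ on $M\setminus Y$.} On the open set $V=M\setminus Y$ we set $\delta(f)=0$. This agrees with $(f|_U)^*\delta$ on $U\cap V$. Indeed, if $\mu$ is supported in $U\setminus Y$, then $\int_f\mu$ is supported in $f(\operatorname{supp}\mu)$, a compact set not containing $0$. So $\delta$ pairs to zero with it.

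\textbf{Step 3: glue.} Generalized functions form a sheaf: agreeing local functionals on an open cover glue to a unique one, using a partition of unity $1=\chi_U+\chi_V$ subordinate to $\{U,V\}$. We define
\[
\langle\delta(f),\mu\rangle=\langle (f|_U)^*\delta,\chi_U\mu\rangle .
\]
We then check three things:
\begin{enumerate*}[label=(\roman*)]
\item the definition is independent of the choice of cutoff, since the difference of two choices is supported away from $Y$ and Step 2 applies;
\item continuity on each $Dens_K(M)$, which holds because $\mu\mapsto\chi_U\mu$ maps $Dens_K(M)$ continuously into $Dens_{K'}(U)$ with $K'=K\cap\operatorname{supp}\chi_U$, followed by the continuous map of Step 1;
\item canonicity: near $Y$ nothing depended on the choices, and $\delta(f)$ coincides with $f^*\delta$ whenever $df$ vanishes nowhere.
\end{enumerate*}

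\textbf{Main obstacle.} The delicate point is the support argument in Step 2 and the choice of $\operatorname{supp}\chi_U$. We need $\chi_U\mu$ to have compact support inside $U$, and the points where $df=0$ must be excluded from fiber integration. Fiber integration along $f$ can fail to produce a smooth density near critical values. Since $0$ is a regular value near $\operatorname{supp}\chi_U$, we shrink the support of $\chi_U$ to a neighbourhood $f^{-1}(-\epsilon,\epsilon)\cap U$ locally, which keeps everything smooth.

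As a consistency check, in the Step 1 coordinates this definition gives $\int\delta(f)g\,df\wedge\nu=\int_Y g\nu$, which is Proposition \ref{def}.
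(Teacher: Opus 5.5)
Your proposal is correct and follows the paper's approach of defining $\delta(f)$ as the pullback $f^*\delta(x)$ dual to fiber integration. The paper gives no separate argument and states the pullback only when $df$ is nowhere vanishing on all of $M$, so your restriction to $U=\{df\neq 0\}$, extension by zero on $M\setminus Y$, and cutoff gluing supply exactly the localization the paper leaves implicit. The shrinking of $\mathrm{supp}\,\chi_U$ into $f^{-1}(-\epsilon,\epsilon)$ in your last paragraph is unnecessary, since $\mathrm{supp}\,\chi_U\subset U$ already keeps fiber integration away from critical points.
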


        To prove Proposition \ref{def}, we need to compute
        $\int_M\delta(f)g\vert df\wedge \, d\nu\vert$.

        \begin{align*}
\int_M\delta(f)g\vert df\wedge  d\nu\vert&=\int_M f^*\delta(x)
g\vert df\wedge d\nu\vert\\
&=\int_{\R}\delta(x)\int_f\, g\, \vert df\wedge d\nu\vert\\
&=\int_{\R}\delta(x)dx\int_fg
\vert d\nu\vert\\
&=\int_Yg\, \vert d\nu\vert
\end{align*}
 where we have moved the $df$ inside the fiber integration  to a $dx$ on the real line.
     Now to prove Prosition \ref{local}, we just need to observe that $\bigl(\frac{\partial f}{\partial x_1}\bigr)^{-1}df\wedge dx_2\cdots \wedge dx_n=dx_1\wedge\cdots\wedge dx_n$, so that $\bigl(\frac{\partial f}{\partial x_1}\bigr)^{-1}dx_2\cdots \wedge dx_n$ is an allowable choice for the $(n-1)$-form $\nu$.

     \vskip .12 in
     Now we get a nice geometric formula in the riemannian situation, where $M$ has a riemannian metric $ds^2$ with volume density $|\omega|$. Let $|\nu|$ be the volume form on $Y=f^{-1}(0)$ for the induced metric $(ds^2)_Y$.

     \begin{prop}
We have $\int_M\delta(f)g\omega =\int_Y\, \vert\vert \nabla f\vert\vert^{-1} g\nu$

         \end{prop}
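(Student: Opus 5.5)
The plan is to reduce this Riemannian statement to Proposition \ref{def} by choosing a good $(n-1)$-form $\nu$, namely one for which $df\wedge\nu=\omega$ near $Y$. For that choice, Proposition \ref{def} gives
\[
\int_M\delta(f)\,g\,\omega=\int_M \delta(f)\,g\,df\wedge \nu=\int_Y g\,\nu .
\]
So everything comes down to identifying the restriction of such a $\nu$ to $Y$ with $\Vert\nabla f\Vert^{-1}$ times the Riemannian volume form of $(Y,(ds^2)_Y)$. Since only the restriction of the Gelfand--Leray form $\omega/df$ to $Y$ matters, any convenient representative will do. The statement is local along $Y$, so I will work near $Y$ with a local orientation, and use a partition of unity if the global bookkeeping with densities needs it.

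The natural candidate is
\[
\nu=\Vert\nabla f\Vert^{-2}\,\iota_{\nabla f}\,\omega ,
\]
the contraction of $\omega$ with the vector field $\nabla f/\Vert\nabla f\Vert^{2}$. This vector field is smooth near $Y$ because $df\neq0$ there. To check $df\wedge\nu=\omega$, I use that $df\wedge\omega=0$, since it is an $(n+1)$-form. Applying $\iota_{\nabla f}$ gives
\[
0=\iota_{\nabla f}(df\wedge\omega)=df(\nabla f)\,\omega-df\wedge\iota_{\nabla f}\omega .
\]
Since $df(\nabla f)=\Vert\nabla f\Vert^2$, this yields $df\wedge\iota_{\nabla f}\omega=\Vert\nabla f\Vert^2\omega$, which is exactly what is needed.

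Next I compute $\nu|_Y$ pointwise. At $p\in Y$, take an orthonormal basis $e_1,\dots,e_n$ of $T_pM$ with $e_1=\nabla f/\Vert\nabla f\Vert$. Then $e_2,\dots,e_n$ is an orthonormal basis of $T_pY=\ker df_p$, because $\nabla f$ is normal to the level set. Evaluating,
\[
\nu(e_2,\dots,e_n)=\Vert\nabla f\Vert^{-2}\,\omega(\nabla f,e_2,\dots,e_n)=\Vert\nabla f\Vert^{-1}\,\omega(e_1,\dots,e_n)=\pm\Vert\nabla f\Vert^{-1}.
\]
So $|\nu|_Y|$ is $\Vert\nabla f\Vert^{-1}$ times the induced Riemannian density of $Y$. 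Substituting into the display above proves the proposition. With densities throughout, as in the paper's convention $\int_M\delta(f)\,g\,|\omega|$, the signs disappear.

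The step I expect to need the most care is the orientation and density bookkeeping rather than any calculus. Proposition \ref{def} is phrased with forms $df\wedge\nu$, while the statement uses the density $|\omega|$ on $M$ and the density of $Y$. I plan to handle this by working locally with an oriented chart, orienting $Y$ so that $df\wedge\nu$ is positively oriented; this is the standard normal-first convention. The identity then holds with absolute values, and the local pieces glue by the uniqueness of $\nu|_Y$.

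As a cross-check, Proposition \ref{local} gives the same answer in coordinates where $x_1$ is adapted to the normal direction.
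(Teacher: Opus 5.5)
Your argument is correct. It reaches the result by a somewhat different mechanism than the paper.

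The paper works in tubular-neighborhood coordinates $(x_1,\dots,x_{n-1},r)$, with $r$ the normal coordinate. Along $Y$ it factors $|\omega|=|\nu\cdot dr|$ and observes that $df=\frac{\partial f}{\partial r}\,dr$ there, because the tangential part of $df$ vanishes on $Y$. It then reads off $(\omega/df)|_Y=(\partial f/\partial r)^{-1}\nu=\|\nabla f\|^{-1}\nu$, relying on the uniqueness of the restriction of the Gelfand--Leray form to $Y$. This is essentially your closing cross-check in adapted coordinates.

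Your main argument instead writes down the explicit, coordinate-free representative $\|\nabla f\|^{-2}\iota_{\nabla f}\omega$. You check $df\wedge(\cdot)=\omega$ via the antiderivation rule for $\iota$, and evaluate on an orthonormal frame adapted to $\R\,\nabla f\oplus T_pY$. The underlying linear algebra is the same: an orthogonal splitting along $Y$, with the normal derivative of $f$ equal to $\|\nabla f\|$.

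Your version buys three things:
\begin{enumerate*}[label=(\roman*)]
\item it needs no tubular neighborhood theorem;
\item it supplies an actual $(n-1)$-form on all of $\{df\neq 0\}$, so Proposition \ref{def} applies directly without appealing to uniqueness of the restriction;
\item it makes explicit the point the paper leaves tacit, namely that $\partial_r$ must be a unit normal along $Y$ for $\partial f/\partial r=\pm\|\nabla f\|$ and $|\omega|=|\nu\cdot dr|$ to hold.
\end{enumerate*}
The paper's version is shorter and ties directly to the coordinate formula of the introduction.

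When writing yours up, rename your Gelfand--Leray form, since the statement already uses $\nu$ for the induced Riemannian volume form on $Y$.
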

         \begin{proof}
Working on a local tubular neighborhood $T$ of $Y$ in $M$, we have local coordinates $(x_1,\cdots,x_{n-1},r)$ where $r$ is the radial coordinate perpendicular to $Y$. Then on $Y$ itself (but not on a neighborhood of $Y$) we have $\vert\omega\vert=\vert \nu\cdot dr\vert$. Also on $Y$ we have $df=d(f_{/Y})+\frac{\partial f}{\partial r}dr$, so that $\bigl(\frac{\omega}{df}\bigr)_{Y}=(\frac{\partial f}{\partial r})^{-1}\nu=\lVert \nabla f\rVert^{-1} \nu$.
         \end{proof}

  We now have the folliowing total area property, just as in the case  of $\delta(x)$. 

  \begin{prop}\label{tot2}
If the mapping $f:M\to\R$ has non-zero differential at each point of $M$, we have the equation
\begin{equation}
\int_{\R}\, \delta(f-a) da =1
\end{equation}
\end{prop}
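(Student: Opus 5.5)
The plan mirrors the proof of the total area property for $\delta(x)$: the integral $\int_{\R}\delta(f-a)\,da$ is a weak (Pettis) integral of generalized functions on $M$. So it suffices to pair both sides with an arbitrary test density $\mu\in Dens_c(M)$ and check that we get $\int_M\mu$, which is the pairing of the constant generalized function $1$ with $\mu$.

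First, I will identify $\delta(f-a)$ with a pullback. Let $\tau_a(x)=x-a$. Then $f-a=\tau_a\circ f$, and $d(f-a)=df$ vanishes nowhere. Hence $\delta(f-a)=f^*\bigl(\tau_a^*\delta(x)\bigr)=f^*\delta(x-a)$, where $\delta(x-a)$ is the translated delta function from the total area proposition on $\R$. By the definition of the pullback through fiber integration,
\begin{equation*}
\langle \delta(f-a),\mu\rangle=\langle \delta(x-a),\textstyle\int_f\mu\rangle=\Bigl(\int_f\mu\Bigr)(a),
\end{equation*}
where on the right we evaluate the density $\int_f\mu=h(x)\vert dx\vert$ at $x=a$, that is, we read off the coefficient $h(a)$. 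Here $h$ is smooth and compactly supported because $\mu$ is, and because $f$ is a submersion everywhere.

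Next, I integrate in $a$ and interchange the order of integration, which is the Pettis integral step:
\begin{equation*}
\Bigl\langle\int_{\R}\delta(f-a)\,da,\ \mu\Bigr\rangle=\int_{\R}h(a)\,\vert da\vert=\int_{\R}\int_f\mu=\int_M\mu .
\end{equation*}
The last equality is the defining property of fiber integration, namely Fubini along the fibers of $f$. It can be checked locally in coordinates $(f,x_2,\dots,x_n)$, which exist because $df\neq0$ everywhere. Since this holds for every $\mu$, the weak integral equals the constant generalized function $1$.

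The main obstacle is justifying that $a\mapsto\delta(f-a)$ is weakly integrable, i.e.\ that $a\mapsto\langle\delta(f-a),\mu\rangle=h(a)$ is integrable for each $\mu$, and that the resulting functional $\mu\mapsto\int_M\mu$ is continuous on each $Dens_K(M)$. Both follow because $h$ is compactly supported and smooth, and because $\mu\mapsto\int_f\mu$ is continuous, as stated in the proposition on fiber integration. The hypothesis that $df$ is nonzero everywhere, rather than only on $Y$, is exactly what makes every $\delta(f-a)$ defined and makes $\int_f\mu$ smooth for all $a$.
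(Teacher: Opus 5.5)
Your proof is correct and is essentially the paper's argument. The paper applies $f^*$ to $\int_{\R}\delta(x-a)\,da=1$ and notes that pullback commutes with weak integrals. You unwind exactly this by pairing with a test density $\mu$, using $f^*\delta(x-a)=\delta(f-a)$, and then $\int_{\R}\int_f\mu=\int_M\mu$, which is the statement $f^*1=1$ that the paper leaves implicit.
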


\begin{proof}
Apply the pullback map  $f^*$ to the equation $\int_{\R}\, \delta(x-a)da=1$, noting that the pull-back map respects weak integrals.
    \end{proof}

\section{Product of delta functions}\label{sec:product}

We cannot expect the product $\delta(f_1)\cdots \delta(f_k)$ of several sdelta functions on a manifold $M$ to be alwys defined. For instance, the square of $\delta(x)$ on $\R$ makes no sense. However
the product $\delta(x_1)\cdots\delta(x_k)$ is well-defined as a generalized function on $\R^n$ by the formula
\begin{equation}
 \int_{\R^n}\, \delta(x_1)\cdots\delta(x_k)\,g\,\lvert dx_1\cdots dx_n\rvert
 =\int_Y\,g\, \lvert dx_{k+1}\cdots dx_n\rvert
\end{equation}

where $Y$ is the coordinate subspace of $\R^n$ defined by the equations $x_1=\cdots=x_k=0$
\vskip .12 in
 \begin{prop}

Let $(f_1,\cdots,f_k)$ be smooth functions on a smooth manifold $M$ such that the smooth mapping $F=(f_1,\cdots,f_k):M\to\R^k$ is a submersion at all points of $Y=F^{-1}(0)$, then the pullback 
$F^*(\delta(x_1)\cdots\delta(x_k))$  is well-defined, and will be denoted $\delta(f_1)\cdots\delta(f_k)$. This product of delta functions is commutative and associative under the submersive assumptions. 
     \end{prop}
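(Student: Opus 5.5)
The approach copies the construction of $\delta(f)$ for a single function, with the map $f:M\to\R$ replaced by $F:M\to\R^k$. First I will shrink $M$ to an open neighbourhood $U$ of $Y$ on which $F$ is a submersion; this is possible because being a submersion is an open condition. On $U$ there is a fiber integration map $\int_F: Dens_c(U)\to Dens_c(\R^k)$. To build it, cover $U$ by charts in which $F$ is the projection $(x_1,\dots,x_n)\mapsto(x_1,\dots,x_k)$; such charts exist by the local submersion theorem. Fiber integration is then integration over $x_{k+1},\dots,x_n$. Using a partition of unity, these local pieces glue to a map that is independent of the chart, by the change of variables formula, and that is continuous on each Fr\'echet space $Dens_K(U)$, since derivatives of the output are controlled by finitely many derivatives of the input. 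The pullback is then defined by duality: $\langle F^*T,\mu\rangle=\langle T,\int_F\mu\rangle$. Continuity of $F^*$ follows from continuity of $\int_F$, exactly as in the earlier proposition for $f$.

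The only substantive point is that $\delta(x_1)\cdots\delta(x_k)$ must be evaluated only on densities of the form $\int_F\mu$. That is automatic: it is a genuine generalized function on $\R^k$, the Dirac measure at $0$. So $F^*(\delta(x_1)\cdots\delta(x_k))$ is defined on $U$. Away from $Y$ I extend it by $0$; this is consistent because $\int_F\mu$ vanishes near $0$ when $\mu$ is supported off $Y$. This gives a generalized function on $M$ supported on $Y$. In the submersion charts one checks $\langle F^*\delta,g|dx|\rangle=\int_Y g\,|dx_{k+1}\cdots dx_n|$, which agrees with the defining formula for the coordinate product on $\R^n$.

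\emph{Commutativity.} Let $\sigma$ be a permutation of $\{1,\dots,k\}$. Then $(f_{\sigma(1)},\dots,f_{\sigma(k)})=P_\sigma\circ F$, where $P_\sigma$ is the linear coordinate permutation of $\R^k$. Functoriality $(P_\sigma\circ F)^*=F^*P_\sigma^*$ follows from $\int_{P_\sigma\circ F}=P_{\sigma*}\int_F$. Moreover $P_\sigma^*(\delta(x_1)\cdots\delta(x_k))=\delta(x_1)\cdots\delta(x_k)$, because $|\det P_\sigma|=1$ and the Dirac measure at $0$ is invariant.

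\emph{Associativity.} This is the step I expect to be the main obstacle, because it needs a meaning for a product such as $\delta(f_1)\cdot(\delta(f_2)\cdots\delta(f_k))$. I will interpret it as follows. Let $Y'=\{f_2=\dots=f_k=0\}$. Near $Y$, the set $Y'$ is a submanifold on which $f_1|_{Y'}$ has nonvanishing differential, by the submersion hypothesis. Pairing the product with $g|\omega|$ is defined as first restricting to $Y'$ using $\delta(f_2)\cdots\delta(f_k)$, then applying $\delta(f_1|_{Y'})$ on $Y'$. To show this agrees with $F^*(\delta(x_1)\cdots\delta(x_k))$, I will work in a submersion chart adapted to $F$. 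There all the objects are the coordinate products $\delta(x_1)\cdot(\delta(x_2)\cdots\delta(x_k))$, and the claim reduces to Fubini for iterated fiber integration along $\R^n\to\R^{k-1}\to\R^k$ (transitivity of fiber integration, $\int_{F}=\int_{\pi}\circ\int_{F'}$ in the appropriate order). The resulting formula is local, so it globalizes through the partition of unity. Any other bracketing is handled the same way, using a different intermediate submersion.
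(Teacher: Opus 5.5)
Your proof of well-definedness is the paper's argument: fiber integration $\int_F$ along a submersion is continuous into $Dens_c(\R^k)$, and $F^*$ is its transpose. It is correct, and it is more complete than the paper's proof in two respects.

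First, the paper tacitly takes $F$ to be a submersion on all of $M$, although the statement assumes this only on $Y$. Your restriction to a neighbourhood $U$ of $Y$, followed by extension by zero, closes that gap. The extension is legitimate because $F(\mathrm{supp}\,\mu)$ is a compact set missing $0$ whenever $\mathrm{supp}\,\mu\cap Y=\emptyset$, and because generalized functions glue.

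Second, the paper gives no argument at all for commutativity or associativity. Your permutation argument via $P_\sigma$ with $|\det P_\sigma|=1$ is correct. Your iterated-restriction reading of associativity is a reasonable way to give that claim content, since no product of generalized functions is otherwise defined. In adapted coordinates, both sides evaluate directly to $\int g(0,\dots,0,x_{k+1},\dots,x_n)\,|dx_{k+1}\cdots dx_n|$.

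The only slip is the chain ``$\R^n\to\R^{k-1}\to\R^k$'' and the formula $\int_F=\int_\pi\circ\int_{F'}$, which do not typecheck. What you actually use is $F'=(f_2,\dots,f_k)=\pi\circ F$ with $\pi:\R^k\to\R^{k-1}$, together with $f_1$ restricted to the fiber $F'^{-1}(0)$. Your local verification does not depend on that formula, so the argument stands.
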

 
\begin{proof}

  When $F$ is a submersion, the fiber integration map
  $\int_F::Dens_c(M)\to Dens_c(\R^k)$ is well-defined and is continuous for the inductive limit Fr\'echet topologies. The dual map $F^*$ is a continuous map from generalized functions on $\R^k$ to generalized functions on $M$.
\end{proof}

We then have the formula
\begin{equation}
\int_M\delta(f_1)\cdots\delta(f_k) g\vert\omega\vert=\int_Y
g\vert \frac{\omega}{df_1\cdots df_k}\vert
\end{equation}
where the $(n-k)$-differential form $\frac{\omega}{df_1\cdots df_k}$ on $M$ is defined by the condition
\begin{equation}
df_1\wedge\cdots\wedge df_k\wedge \frac{\omega}{df_1\cdots df_k}=\omega
\end{equation}
The $(n-k)$-form $\frac{\omega}{df_1\cdots df_k}$ is not unique, but its restriction to $Y$ is unique, just as in the $k=1$ case.

The construction of the product $\delta(f_1)\cdots\delta(f_k) $ has  a remarkable flexibility with respect to a change of  variables in the following sense. We will say that $(g_1,\cdots,g_k)$ are a new set of variables defining $Y$
 if the corersponding mapping $G:M\to\R^k$ is of the form $G=\Phi\circ F$ where $\Phi:\R^k\to\R^k$ is a smooth mapping which maps $0$ to $0$ which has a smooth inverse in a neighborhood of $0$.
\begin{prop}
If $(g_1,\cdots,g_k)$ are a new set of variables defining $Y$, then we have
\begin{equation}
\delta(g_1)\cdots\delta(g_k)=\vert J\vert^{-1}\, \delta(f_1)\cdots\delta(f_k)
\end{equation}
where $J=\det\left(\frac{\partial g_i}{\partial f_j}\right)$.
\end{prop}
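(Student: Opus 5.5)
Since $G=\Phi\circ F$ and pullback of generalized functions is functorial for submersions, we have $\delta(g_1)\cdots\delta(g_k)=G^*(\delta(x_1)\cdots\delta(x_k))=F^*\Phi^*(\delta(x_1)\cdots\delta(x_k))$, at least on the open set $F^{-1}(V)$ where $V$ is a neighborhood of $0$ on which $\Phi$ is a diffeomorphism. Outside $F^{-1}(V)$ both sides vanish, because both are supported on $Y$. Note that $G$ is a submersion along $Y$ exactly when $F$ is, since $d\Phi(0)$ is invertible. So the proof splits into two steps. First, a computation on $\R^k$ alone:
\begin{equation*}
\Phi^*\bigl(\delta(x_1)\cdots\delta(x_k)\bigr)=\vert\det d\Phi(0)\vert^{-1}\,\delta(x_1)\cdots\delta(x_k).
\end{equation*}
Second, pull this back by $F$ and identify the constant function $\vert\det d\Phi(0)\vert^{-1}$ with $\vert J\vert^{-1}$ restricted to $Y$.

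For the first step I would test against a compactly supported density $h\,\vert dy_1\cdots dy_k\vert$ on $V$. The pullback $\Phi^*$ is dual to pushforward of densities, which for a local diffeomorphism is the change of variables $\Phi_*(h\vert dy\vert)=(h\circ\Phi^{-1})\,\vert\det d\Phi\vert^{-1}\circ\Phi^{-1}\,\vert dx\vert$. Evaluating $\delta(x_1)\cdots\delta(x_k)$ on this density picks out its value at $x=0$, namely $h(0)\vert\det d\Phi(0)\vert^{-1}$, using $\Phi^{-1}(0)=0$. This is the multidimensional analogue of the scaling rule $\delta(ax)=\vert a\vert^{-1}\delta(x)$.

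For the second step, the multiplier $\vert\det d\Phi(0)\vert^{-1}$ must be compared with $J=\det(\partial g_i/\partial f_j)=\det(d\Phi)\circ F$. On $Y$ we have $F=0$, so $J\vert_Y=\det d\Phi(0)$. The products $\delta(f_1)\cdots\delta(f_k)$ are supported on $Y$, and by the formula $\int_M\delta(f_1)\cdots\delta(f_k)\,g\,\vert\omega\vert=\int_Y g\,\vert\omega/df_1\cdots df_k\vert$, multiplying by a smooth function only sees that function's restriction to $Y$. Hence $\vert J\vert^{-1}\delta(f_1)\cdots\delta(f_k)=\vert\det d\Phi(0)\vert^{-1}\delta(f_1)\cdots\delta(f_k)$, which finishes the argument.

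As an alternative check, I would verify the identity directly through the Gelfand--Leray description. Since $dg_1\wedge\cdots\wedge dg_k=J\,df_1\wedge\cdots\wedge df_k$, the form $J^{-1}\frac{\omega}{df_1\cdots df_k}$ is an admissible choice of $\frac{\omega}{dg_1\cdots dg_k}$. The formula for products of delta functions then gives the result immediately after restricting to $Y$.

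The main obstacle is justifying the functoriality $(\Phi\circ F)^*=F^*\circ\Phi^*$ when $\Phi$ is only a local diffeomorphism near $0$. I would handle it by localizing with a cutoff: split a test density into a part supported in $F^{-1}(V)$ and a part supported away from $Y$. On the first part, functoriality follows from the fact that fiber integration is transitive, $\int_G=\Phi_*\circ\int_F$. On the second part, both sides vanish.
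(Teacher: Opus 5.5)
Your argument is correct, but your main route differs from the paper's. The paper's proof is exactly what you call the alternative check. From $dg_1\wedge\cdots\wedge dg_k=J\,df_1\wedge\cdots\wedge df_k$ it concludes that $J^{-1}\frac{\omega}{df_1\cdots df_k}$ is an admissible choice of $\frac{\omega}{dg_1\cdots dg_k}$. It then reads off the identity from the formula $\int_M\delta(f_1)\cdots\delta(f_k)\,g\,\vert\omega\vert=\int_Y g\,\vert\frac{\omega}{df_1\cdots df_k}\vert$. That is a one-line pointwise identity of forms. However, it rests on that integral formula for products, which the paper states for $k>1$ only by analogy with $k=1$, and on the uniqueness of the restriction of the Gelfand--Leray form to $Y$.

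Your primary argument works straight from the definition $\delta(f_1)\cdots\delta(f_k)=F^*\bigl(\delta(x_1)\cdots\delta(x_k)\bigr)$. You factor $G^*=F^*\Phi^*$ via transitivity of fiber integration, which reduces everything to the $k$-dimensional scaling rule on $\R^k$, the analogue of $\delta(ax)=\vert a\vert^{-1}\delta(x)$. Along the way you make explicit something the paper's formulation hides: $J\vert_Y=\det d\Phi(0)$ is a constant, so $\vert J\vert^{-1}$ is really just a number.

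Your approach costs two things. First, you need the cutoff argument to justify functoriality when $\Phi$ is only locally invertible. Second, you need the fact that $\delta(f_1)\cdots\delta(f_k)$ is a measure on $Y$, not merely supported on $Y$. You rightly draw this from the integral formula. You could avoid that step with the projection formula $F^*(\psi\,T)=(\psi\circ F)\,F^*T$, which gives $\vert J\vert^{-1}\delta(f_1)\cdots\delta(f_k)=F^*\bigl(\vert\det d\Phi\vert^{-1}\delta(x_1)\cdots\delta(x_k)\bigr)$ directly.

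One more point: your claim that both sides vanish off $F^{-1}(V)$ uses $G^{-1}(0)=Y$. The paper's definition of $\Phi$ does not literally enforce this, since it allows other zeros of $\Phi$ in $F(M)$, but it is what ``defining $Y$'' must mean. The paper's argument needs it just as much.
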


\begin{proof}
We have the equation $dg_1\wedge\cdots\wedge dg_k=J\, df_1\wedge\cdots\wedge df_k$ which yields immediately
\begin{equation}
 \frac{\omega}{dg_1\cdots dg_k}=J^{-1}\frac{\omega}{df_1\cdots df_k}.
\end{equation}
\end{proof}

In the case where $M$ has a riemannian metric with volume density $\vert \omega\vert$ and $\vert\nu\vert$ is the volume density on $Y$ given by the induced metric, we have the following local version of the smooth coarea formula:

\begin{prop}\label{local}

\begin{equation}
\int_M\, \delta(f_1)\cdots \delta(f_k)\, g\,|\omega|
=\int_Y\, |NJ(F)|^{-1}\, g\, \vert\nu\vert.
\end{equation}
where $NJ(F)$ is the \underline{normal jacobian} of $F$, i.e., the determinant of the restriction of the differential of $F$ to the normal bundle of $Y$. See e.g. \cite{Ch} for the normal jacobian.
\end{prop}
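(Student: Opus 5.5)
The plan is to reduce the statement to the formula
\[
\int_M\delta(f_1)\cdots\delta(f_k)\, g\,|\omega| = \int_Y g\,\Bigl|\frac{\omega}{df_1\cdots df_k}\Bigr|
\]
established just above, so that everything comes down to a pointwise identity on $Y$:
\[
\Bigl(\frac{\omega}{df_1\cdots df_k}\Bigr)\Big|_Y = \pm\, NJ(F)^{-1}\,\nu .
\]
Since the restriction of the Gelfand--Leray form to $Y$ is unique, it is enough to exhibit, at each point $p\in Y$, one $(n-k)$-form whose restriction to $T_pY$ is $NJ(F)(p)^{-1}\nu_p$, and which satisfies $df_1\wedge\cdots\wedge df_k\wedge(\cdot)=\omega$ at $p$. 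The identity is pointwise linear algebra, so we can work one point at a time, exactly as in the $k=1$ Riemannian proposition.

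Fix $p\in Y$. Choose an orthonormal basis $e_1,\dots,e_k$ of the normal space $N_pY=(T_pY)^\perp$, and an orthonormal basis $e_{k+1},\dots,e_n$ of $T_pY=\ker dF_p$. Let $e^1,\dots,e^n$ be the dual coframe. Then $|\omega_p|=|e^1\wedge\cdots\wedge e^n|$, and $\nu_p=e^{k+1}\wedge\cdots\wedge e^n$ restricted to $T_pY$, up to sign. Since each $df_i$ vanishes on $T_pY$, we can write
\[
df_i|_p=\sum_{j\le k} A_{ij}\,e^j ,
\]
where $A=(df_i(e_j))_{i,j\le k}$ is precisely the matrix of $dF_p$ restricted to $N_pY$, written in orthonormal bases of $N_pY$ and $\R^k$. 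Therefore $\det A=NJ(F)(p)$, and
\[
df_1\wedge\cdots\wedge df_k=\det A\; e^1\wedge\cdots\wedge e^k .
\]
The form $(\det A)^{-1}e^{k+1}\wedge\cdots\wedge e^n$ (extended smoothly near $p$, for instance by Gram--Schmidt on a local frame) then satisfies the defining equation at $p$, and its restriction to $T_pY$ is $NJ(F)^{-1}\nu$ up to sign. Taking absolute values gives the pointwise identity, and integrating over $Y$ gives the proposition.

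The main obstacle is a subtlety rather than a computation: the Gelfand--Leray form is only required to satisfy its defining equation near $Y$, while the frame above is adapted only along $Y$. Two points need care here. First, the uniqueness claim ``restriction to $Y$ is unique'' must be used in pointwise form: if $df_1\wedge\cdots\wedge df_k\wedge\alpha=0$ at $p$, then $\alpha|_{T_pY}=0$. This holds because the $df_i$ span the annihilator of $T_pY$, so $\alpha$ lies in the ideal they generate. Second, $NJ(F)$ must be shown independent of the choice of orthonormal bases, which follows because orthogonal changes of basis have determinant $\pm1$. Neither point should be hard, but they are where the argument must be made precise.
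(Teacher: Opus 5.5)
Your proposal is correct and follows the paper's argument. At each $y\in Y$ you take an orthonormal coframe adapted to $T_yM=T_yY\oplus N_y$, observe that $df_1\wedge\cdots\wedge df_k=NJ(F)(y)\,e^1\wedge\cdots\wedge e^k$, and read off the restriction of the Gelfand--Leray form to $Y$ as $NJ(F)^{-1}\nu$; your pointwise uniqueness argument (that $df_1\wedge\cdots\wedge df_k\wedge\alpha=0$ at $p$ forces $\alpha|_{T_pY}=0$) makes explicit what the paper's one-line ``contracting the volume density'' step leaves implicit.
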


\begin{proof}
Fix $y\in Y$ and consideer  the orthogonal decomposition $T_yM=T_yY\oplus N_y$ with $\dim N_y=k$. Let $\xi_1,\dots,\xi_k$ be an orthornormal basis of $N_y^*$ such that
$df_1\wedge\cdots\wedge df_k = NJ(F)(y)\,\xi_1\wedge\cdots\wedge\xi_k$ on $N_y$. Contracting the volume density $\vert\omega\vert$ by $|df_1,\dots,df_k|$ therefore produces the induced riemannian density on $T_yY$ scaled by $NJ(F)(y)^{-1}$, i.e. $NJ(F)^{-1}\vert\nu\vert$.
\end{proof}

We again have the total area property

\begin{prop}\label{total}
If the mapping $F:M\to \R^k$ is a submersion, we have
\begin{equation}
\int_{\R^k}\, \delta(f_1-a_1)\cdots \delta(f_k-a_k)\,\vert da_1\cdots da_k\vert=1.
\end{equation}
\end{prop}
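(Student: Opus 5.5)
The plan is to imitate the proof of Proposition \ref{tot2}: first establish the total area property on $\R^k$ for the product $\delta(x_1-a_1)\cdots\delta(x_k-a_k)$, then pull it back along the submersion $F$.

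\textbf{Step 1 (the case of $\R^k$).} We claim that, as generalized functions on $\R^k$,
\begin{equation*}
\int_{\R^k}\delta(x_1-a_1)\cdots\delta(x_k-a_k)\,\vert da_1\cdots da_k\vert=1,
\end{equation*}
where the integral is a weak (Pettis) integral. Here $\delta(x_1-a_1)\cdots\delta(x_k-a_k)$ is the translate by $a=(a_1,\dots,a_k)$ of $\delta(x_1)\cdots\delta(x_k)$. That product was defined in Section \ref{sec:product}, with $n=k$ and $Y=\{0\}$; evaluated against a test density $g(x)\vert dx_1\cdots dx_k\vert$ it gives $g(0)$. By translation, the translate evaluates to $g(a)$. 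Pairing the weak integral with the test density, we exchange the order of integration, exactly as in the one-dimensional total area property, and obtain
\begin{equation*}
\int_{\R^k}g(a)\,\vert da_1\cdots da_k\vert=\int_{\R^k}g(x)\,\vert dx_1\cdots dx_k\vert .
\end{equation*}
This is the pairing of the constant function $1$ with the test density, which proves the claim.

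\textbf{Step 2 (pulling back).} Since $F$ is a submersion everywhere, each $F-a$ is a submersion at every point of $(F-a)^{-1}(0)$. So each product $\delta(f_1-a_1)\cdots\delta(f_k-a_k)$ is defined, by the proposition on products of delta functions. It equals $F^*$ applied to the translated product on $\R^k$, because $F-a=\tau_{-a}\circ F$ with $\tau_{-a}(x)=x-a$, and pullback is functorial. Moreover $F^*:\mathcal D'(\R^k)\to\mathcal D'(M)$ is continuous and linear, and it is the transpose of the fiber integration $\int_F$. Hence it commutes with weak integrals: pairing with a test density $\mu$ on $M$ reduces to pairing the $\R^k$ identity with $\int_F\mu\in Dens_c(\R^k)$. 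Finally $F^*1=1$, since $\langle 1,\int_F\mu\rangle=\int_M\mu$. Applying $F^*$ to the identity of Step 1 therefore gives the statement.

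\textbf{Main obstacle.} The delicate point is justifying that $a\mapsto \langle\delta(F-a),\mu\rangle$ is integrable and that the weak integral commutes with $F^*$. This quantity equals $(\int_F\mu)(a)$ divided by $\vert da\vert$, which is a smooth compactly supported function of $a$. So Fubini applies directly, and the commutation is just the definition of the transpose.
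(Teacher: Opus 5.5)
Your proof is correct and takes the paper's route. The paper gives no separate proof of this proposition and simply refers back to Proposition~\ref{tot2}: establish the total area identity on the target, then apply $F^*$, using that pullback respects weak integrals. Your Steps~1 and~2 are the $k$-dimensional version of that argument, with the tacit points $(F-a)^*=F^*\circ\tau_{-a}^*$ and $F^*1=1$ made explicit.
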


Thos gives the following 
\begin{theorem}
We have the following smooth coarea formula
\begin{equation}
\mathop{\int}_{\R^k}\, g\omega=\int_{\R^k}\, 
    |da_1\cdots da_k|\bigl(\mathop{\int}_{F^{-1}(\bar{a} )}|NJ(F)|^{-1}g
\nu_{\bar{a}}\bigr),
\end{equation}
where $\bar a=(a_1,\cdots,a_k)$ and $\nu_{\bar a}$ is the riemannian volume form on $F^{-1}(\bar a)$.
\end{theorem}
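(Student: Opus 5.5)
The left-hand side should be read as $\int_M g\,\omega$; the subscript $\R^k$ is a misprint. The plan is to insert the total area property (Proposition \ref{total}) into this integral, exchange the order of integration, and then apply the local coarea formula (Proposition \ref{local}) on each fiber $F^{-1}(\bar a)$.

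First, by Proposition \ref{total}, since $F$ is a submersion we have the identity of generalized functions on $M$
\begin{equation*}
1=\int_{\R^k}\delta(f_1-a_1)\cdots\delta(f_k-a_k)\,|da_1\cdots da_k|,
\end{equation*}
understood as a weak (Pettis) integral. Pair both sides against the test density $g|\omega|$, with $g$ compactly supported. The left side gives $\int_M g|\omega|$. On the right side, the definition of the weak integral allows us to move the pairing inside:
\begin{equation*}
\int_M g|\omega|=\int_{\R^k}|da_1\cdots da_k|\int_M \delta(f_1-a_1)\cdots\delta(f_k-a_k)\,g\,|\omega|.
\end{equation*}
This is the same manipulation as in the proof of the one-dimensional total area property.

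Second, for each fixed $\bar a$ the map $F-\bar a$ is a submersion with zero locus $F^{-1}(\bar a)$. Its normal jacobian equals $NJ(F)$, since subtracting a constant does not change the differential. Proposition \ref{local}, applied to the functions $f_i-a_i$, therefore turns the inner integral into $\int_{F^{-1}(\bar a)}|NJ(F)|^{-1}g\,\nu_{\bar a}$. Substituting this gives the stated formula.

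The main obstacle is justifying the weak integral in the first step. One needs that $\bar a\mapsto\langle\delta(F-\bar a),g|\omega|\rangle$ is measurable and integrable, and that pairing against a test density really commutes with the $\bar a$-integral. I would handle this concretely through the fiber integration map $\int_F$. By construction of the pullback, $\langle F^*\delta(x-\bar a),g|\omega|\rangle$ equals the value at $\bar a$ of the smooth compactly supported density $\int_F g|\omega|$ on $\R^k$. The function of $\bar a$ is thus smooth with compact support, and integrating it over $\R^k$ recovers $\int_M g|\omega|$ by Fubini for fiber integration. This simultaneously proves Proposition \ref{total} in paired form and justifies the interchange.
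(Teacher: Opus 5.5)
Your proposal is correct and is essentially the paper's proof, which likewise combines the local coarea formula with the total area property (and you are right that the left-hand side should read $\int_M g\,\omega$). Your justification of the interchange through the fiber integration map $\int_F$ identifies $\bar a\mapsto\langle\delta(F-\bar a),g|\omega|\rangle$ with the coefficient of the smooth compactly supported density $\int_F g|\omega|$, and so supplies detail the paper leaves implicit in its appeal to weak integrals.
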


\begin{proof}
        Combine the local coarea formula \ref{local} with the total area property \ref{total}
\end{proof}

\section{Computation  of marginal distributions}\label{marginal}

We will consider probaility distributions on  a smooth compact manifold $M$ which are given by a smooth density $|\omega|$ over $M$. Given smooth functions
$(f_1,\cdots,f_k)$ over $M$, viewed as random variables over $M$, we can form the marginal distribution of these functions, which is the push-forward $F_*|\omega|$ of $|\omega|$ under the mapping $F:M\to \R^k$.

\begin{prop} Assume that the mapping $F:M\to\R^k$ is submersion. Then the marginal probability distribution of the $f_i$'s has a smooth pdf $g(\bar a)$ (probability density function) given by $g(\bar a)=\int_M, \delta(f_1-a_1)\cdots\delta(f_k-a_k)|\omega|$
\end{prop}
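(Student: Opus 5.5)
We identify $g$ by testing the pushforward $F_*|\omega|$ against an arbitrary smooth compactly supported function $h$ on $\R^k$, and we show that its integral equals $\int_{\R^k} h(\bar a)\,g(\bar a)\,|da_1\cdots da_k|$. Since $M$ is compact and $F$ is a submersion, Ehresmann's theorem makes $F:M\to F(M)$ a locally trivial fibration over the open set $F(M)$. For every $\bar a$ the shifted map $F-\bar a$ is still a submersion, so by the proposition on products of delta functions the generalized function $\delta(f_1-a_1)\cdots\delta(f_k-a_k)=(F-\bar a)^*(\delta(x_1)\cdots\delta(x_k))$ is defined. Because $M$ is compact, the constant $1$ is an admissible test function, so $g(\bar a)$ is a genuine number. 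It equals $0$ when $\bar a\notin F(M)$.

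The first step is to express $g(\bar a)$ through fiber integration. By definition of pullback, $g(\bar a)=\int_{\R^k}\delta(x-\bar a)\,\bigl(\int_F|\omega|\bigr)$, where $\int_F|\omega|\in Dens_c(\R^k)$ is the fiber integral. Writing $\int_F|\omega|=\rho(x)|dx_1\cdots dx_k|$ with $\rho$ smooth, we get $g(\bar a)=\rho(\bar a)$. Equivalently, by the formula following the product proposition, $g(\bar a)=\int_{F^{-1}(\bar a)}\bigl|\frac{\omega}{df_1\cdots df_k}\bigr|$. Smoothness of $g$ then follows from smoothness of the fiber integral, which in a local trivialization $F^{-1}(U)\cong U\times Y$ is differentiation under the integral sign over the compact fiber.

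The second step identifies $\rho$ as the pdf. Insert the total area property (Proposition \ref{total}) $1=\int_{\R^k}\delta(f_1-a_1)\cdots\delta(f_k-a_k)|da|$ into $\int_M (h\circ F)|\omega|$. Exchange the weak integral with pairing against the density $(h\circ F)|\omega|$. Then use that $\delta(F-\bar a)\,(h\circ F)=h(\bar a)\,\delta(F-\bar a)$, which is checked directly on the fiber-integral formula since $h\circ F$ is constant on $F^{-1}(\bar a)$. This yields
\begin{equation*}
\int_{\R^k}h\,d(F_*|\omega|)=\int_M (h\circ F)|\omega|=\int_{\R^k}h(\bar a)\,g(\bar a)\,|da_1\cdots da_k|.
\end{equation*}
Alternatively, this is precisely the smooth coarea theorem above combined with the local coarea formula. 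Since $h$ is arbitrary, $F_*|\omega|=g\,|da|$.

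The main obstacle is justifying the interchange of the weak (Pettis) integral over $\bar a$ with evaluation on the test density. This needs continuity of $\bar a\mapsto\delta(F-\bar a)$ as a map into $\mathcal D'(M)$, which in turn rests on continuity of fiber integration. I would handle it by avoiding the abstract interchange: since $\int_F\bigl((h\circ F)|\omega|\bigr)=h\,\rho\,|dx|$ by the projection formula for fiber integration, the identity reduces to the defining property of pushforward under fiber integration, $\int_M\mu=\int_{\R^k}\int_F\mu$, which is Fubini in local trivializations.
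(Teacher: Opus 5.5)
Your proof is correct and follows the same route as the paper: write $\delta(f_1-a_1)\cdots\delta(f_k-a_k)=F^*\bigl(\delta(x_1-a_1)\cdots\delta(x_k-a_k)\bigr)$, dualize the pullback to fiber integration, and read off the density of $\int_F|\omega|$ at $\bar a$; your second step (projection formula plus Fubini) only makes explicit two things the paper uses without comment, namely the identification of $\int_F|\omega|$ with the pushforward measure $F_*|\omega|$ and the smoothness of $g$. One caveat applies to the statement itself and so equally to the paper: a nonempty compact $M$ admits no submersion $F:M\to\R^k$ with $k\geq 1$ (your open set $F(M)$ would also be compact), so the hypothesis is vacuous as stated, and the substantive version lives over the open set $U$ of regular values, where your Ehresmann argument applied to the proper submersion $F^{-1}(U)\to U$ goes through unchanged.
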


\begin{proof}
We have
\begin{align*}
\int_M\, \delta(f_1-a_1)\cdots\delta(f_k-a_k)|\omega&|=\int_M\, F^*\bigl(\delta)(x_1-a_1)\cdots\delta(x_k-a_k)|\omega|\\
&= \int_M\, \delta(x_1-a_1)\cdots \delta(x_k-a_k)F_*|\omega|\\
&=g(\bar a)
    \end{align*}

    \end{proof}

Let $S^{n-1}$ be the standard sphere of radius $1$ in $\R^n$, equipped with its unique rotation-invariant volume density $|\omega|$. We will compute the marginal distribution for the $k$ coordinate functions $(x_1,\cdots,x_k)$.

\begin{lemma}
The mapping $F:S^{n-1}\to \R^k$ given by $F(x)=(x_1,\cdots,x_k)$ is a submersion.
\end{lemma}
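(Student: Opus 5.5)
The plan is to compute the differential of $F$ directly, using the description of the tangent space of the sphere as $T_xS^{n-1}=\{v\in\R^n : \langle x,v\rangle=0\}$. Since $F$ is the restriction to $S^{n-1}$ of the linear projection $P:\R^n\to\R^k$, $P(v)=(v_1,\cdots,v_k)$, its differential is $dF_x=P|_{T_xS^{n-1}}$. Surjectivity of $dF_x$ therefore comes down to a linear algebra question: does the projection of the hyperplane $x^\perp$ onto the first $k$ coordinates fill all of $\R^k$?

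Write $x=(x',x'')$ with $x'\in\R^k$ and $x''\in\R^{n-k}$, and similarly $v=(v',v'')$. Fix an arbitrary target $w\in\R^k$. I look for $v''$ such that $(w,v'')\in x^\perp$, which means $\langle x'',v''\rangle=-\langle x',w\rangle$. If $x''\neq 0$, the choice $v''=-\frac{\langle x',w\rangle}{\|x''\|^2}\,x''$ works, so $dF_x$ is onto at every such point.

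The main obstacle is the remaining case $x''=0$, that is $\|x'\|=1$, and here I expect the statement to need a correction. At such a point every $v\in T_xS^{n-1}$ satisfies $\langle x',v'\rangle=0$, so the image of $dF_x$ is the hyperplane $x'^\perp\subset\R^k$. It has dimension $k-1$, and $F$ is \emph{not} a submersion there. The same applies when $k=n$, where $x''=0$ at every point.

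I would therefore state and prove the lemma in the following corrected form: for $k<n$, $F$ is a submersion on the open dense subset
\begin{equation}
U=\{x\in S^{n-1} : x_{k+1}^2+\cdots+x_n^2>0\}=F^{-1}\bigl(\{\bar a\in\R^k : \|\bar a\|<1\}\bigr).
\end{equation}
This suffices for the marginal computation. The complement $S^{n-1}\setminus U$ is the sphere $S^{k-1}$ sitting inside $S^{n-1}$, which has codimension $n-k\geq 1$ and hence $|\omega|$-measure zero. So $F_*|\omega|$ is the pushforward of $|\omega|$ restricted to $U$. The earlier proposition on marginal distributions then applies to $F|_U:U\to$ the open unit ball in $\R^k$, and it yields the pdf $g(\bar a)$ for $\|\bar a\|<1$, with $g(\bar a)=0$ for $\|\bar a\|>1$.
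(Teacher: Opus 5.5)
You are right: the lemma is false as stated, and your computation is the correct diagnosis. At a point with $x_{k+1}=\cdots=x_n=0$ the image of $dF_x$ is the hyperplane $(x')^{\perp}\subset\R^k$. For example, with $n=2$, $k=1$, $dF$ vanishes on $T_{(\pm1,0)}S^1$.

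There is also a one-line topological reason. A submersion is an open map, so a submersion from the compact manifold $S^{n-1}$ to $\R^k$ with $k\geq 1$ would have a nonempty image that is both open and compact in $\R^k$, which is impossible. The same argument shows that the hypothesis of the paper's marginal-distribution proposition (compact $M$, $F$ a submersion) can never hold for $k\geq1$. Some restriction to an open subset such as your $U$ is therefore unavoidable.

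The paper's proof breaks at the assertion that $h(x)=(x_1,\dots,x_k,r^2)$ is a submersion on all of $\R^n$. Since $d(r^2)=2\sum_{i=1}^n x_i\,dx_i$, this differential lies in the span of $dx_1,\dots,dx_k$ precisely when $x_{k+1}=\cdots=x_n=0$, and that locus meets $S^{n-1}$ in the sphere $S^{k-1}$. For $k=n$ the claim fails for dimension reasons alone, since $h:\R^n\to\R^{n+1}$. The deduction from $h$ to $F$ is otherwise sound. So the paper's route, used only where $h$ really is a submersion, yields exactly your set $U$. Your direct computation on $x^{\perp}$ is pointwise equivalent, but it also pins down the rank drop exactly.

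One point in your downstream remark should be tightened. You cannot literally invoke the marginal-distribution proposition on $U$, because it is stated for compact $M$ and $U$ is not compact. What makes your repair work is that $F|_U:U\to B=\{\bar a\in\R^k:\|\bar a\|<1\}$ is \emph{proper}: for compact $K\subset B$, the set $F^{-1}(K)$ is closed in $S^{n-1}$, hence compact, and it lies in $U$. Fiber integration along a proper submersion sends the smooth density $|\omega|$ on $U$ to a smooth density on $B$. The identity $g(\bar a)=\int\delta(x_1-a_1)\cdots\delta(x_k-a_k)\,|\omega|$ then holds for $\bar a\in B$ by the same argument as in the paper, and your measure-zero observation about $S^{k-1}$ shows that nothing is lost at $\|\bar a\|=1$.
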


\begin{proof}
Consider the mapping $h:\R^n\to \R^{k+1}$ given by
\[h(x)=(x_1,\dots,x_k,\,x_1^2+\cdots+x_n^2).\]
It is a submersion since the differentials of these $k+1$ functions are linearly independent everywhere. Restricting $h$ to the level set $x_1^2+\cdots+x_n^2=1$ shows that $F$ has rank $k$ on $S^{n-1}$.
\end{proof}

Then we can form the product $\delta(f_1-a_1)\cdots\delta(f_k-a_k)$ as a generalized function over $S^{n-1}$.

\begin{lemma}
The invariant volume density $|\omega|$ on $S^{n-1}$ satisfies
\begin{equation}
|\omega|=2\,\frac{|dx|}{d(r^2)},\qquad r^2=\sum_{i=1}^n x_i^2.
\end{equation}
\end{lemma}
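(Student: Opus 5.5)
We need to show that the Gelfand--Leray residue of $|dx| = |dx_1\cdots dx_n|$ with respect to the function $r^2$, restricted to $S^{n-1}=\{r^2=1\}$, equals half the rotation-invariant density. Since the residue restricted to $Y$ is unique, we may compute it with any convenient choice. The plan is to compare with the Riemannian version of Proposition \ref{def}, the earlier result $\int_M\delta(f)g\,\omega=\int_Y\|\nabla f\|^{-1}g\,\nu$. Equivalently, we show directly that $\bigl(\frac{|dx|}{d(r^2)}\bigr)_{/S^{n-1}}=\|\nabla(r^2)\|^{-1}|\nu|$, where $|\nu|$ is the induced Riemannian density.

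First, use polar coordinates on $\R^n\setminus\{0\}$: $x=r\theta$ with $\theta\in S^{n-1}$. Then $|dx|=r^{n-1}|dr|\,|\omega_\theta|$, where $|\omega_\theta|$ is the pullback of the round density $|\omega|$ under the projection $x\mapsto x/r$. This is the standard polar volume formula. It follows from the orthogonal splitting $T_x\R^n=\R x\oplus x^\perp$, where the radial unit vector has length one and the projection scales tangential directions by $r$.

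Second, since $d(r^2)=2r\,dr$, we can take the form $\frac{|dx|}{d(r^2)}=\frac{r^{n-1}}{2r}|\omega_\theta|=\frac{r^{n-2}}{2}|\omega_\theta|$. Indeed, $d(r^2)\wedge\frac{r^{n-2}}{2}\omega_\theta=r^{n-1}dr\wedge\omega_\theta$, which is $dx$ up to orientation. Restricting to $r=1$ gives $\frac12|\omega|$, that is, $|\omega|=2\frac{|dx|}{d(r^2)}$ on $S^{n-1}$, as claimed.

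As a cross-check, apply the Riemannian proposition: $\nabla(r^2)=2x$, so $\|\nabla(r^2)\|=2$ on $S^{n-1}$, and the residue is $\frac12|\nu|$. Here $|\nu|$ is the induced metric density, which is rotation invariant and hence equals $|\omega|$. The only delicate point is the normalization of $|\omega|$. The statement says only that it is the rotation-invariant density, which is unique up to scale, so we fix it to be the induced Riemannian density. With that convention the identity is exact, and the main work is just the uniqueness of the restricted Gelfand--Leray form, which the paper has already asserted.
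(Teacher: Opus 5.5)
Your proof is correct and follows the same route as the paper: the polar-coordinate identity $|dx|=r^{n-1}|dr|\,|\omega|$ together with $d(r^2)=2r\,dr$, restricted to $r=1$, gives $\bigl(\frac{|dx|}{d(r^2)}\bigr)_{/S^{n-1}}=\frac12|\omega|$. Your cross-check via $\|\nabla(r^2)\|=2$ and your remark that $|\omega|$ must be normalized as the induced Riemannian density are sound additions that the paper leaves implicit (one small slip: the projection $x\mapsto x/r$ scales tangential vectors by $1/r$, not $r$, though the factor $r^{n-1}$ you derive is right).
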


\begin{proof}
In polar coordinates, $|dx|=r^{n-1}|\omega|\,|dr|$, hence $|dx|/|dr|=r^{n-1}|\omega|$. On the locus $r=1$ we have $d(r^2)=2r\,dr=2\,dr$, giving the stated identity.
\end{proof}

We are now ready to compute the pdf $g(\bar a)$ of the random variables $(x_1,\cdots,x_k)$ on $S^{n-1}$. We first note that changing the function $r^2$ to $x_{k+1}^2+\cdots+x_n^2$ in the $k+1$ functions $(x_1,x_2,\cdots,x_k,x_1^2+\cdots+x_n^2)$ introduces no jacobian factor.

\begin{align*}
g(\bar a)
&=\int_{S^{n-1}} \delta(x_1-a_1)\cdots\delta(x_k-a_k)\,|\omega|\\
&=\frac{1}{2}\int_{\R^n} \delta(x_1-a_1)\cdots\delta(x_k-a_k)\,\delta\bigl(x_{k+1}^2+\cdots+x_n^2-(1-|\bar a|^2)\bigr)\,|dx|\\
&=\frac{1}{2}\int_{\R^{n-k}} \delta\bigl(x_{k+1}^2+\cdots+x_n^2-(1-|\bar a|^2)\bigr)\,|dx_{k+1}\cdots dx_n|\\
&=V_{n-2}(1-|\bar a|^2)^{\frac{n-2}{2}}
\end{align*}
where $V_{n-2}$ is the volume of the radius one solid sphere of dimension $n-2$.

In order to see a valuable limit for $n\to\infty$, it is  appropriate to rescale the random variables $x_i$ of variance $1/n$ into the variables $y_i=n^{1/2} x_i$ which have variance $1$. Then up to a constant factor we have the pdf
$p(\bar y)\propto(1-\frac{y^2}{n})^{\frac{n-2}{2}} $. By Euler's formula this tends to $\propto \exp(-y^2)$. This fits with the well-known result that the marginal distribution of $k$ coordinate functions on $S^{n-1}$ tends to a Gaussian distribution as $n\to\infty$  and $k=o(n)$, see \cite{di-fr}.

In quantum computation, the random variable $p=x_1^2+x_2^2$ is of great interest as it gives the probability of a measurement result when $n=2m$, so that $\R^n=\C^m$ and $x_1^2+x_2^2=|z_1|^2$. In the $n\to\infty$ limit, this random variable has the pdf proportional to $exp(-p)$, so is an exponential distribution, aka as Porter-Thomas distribution. See \cite{bo} and \cite{google} for  thorough discussion,s from both the theoretical and experimental standpoints, in the context of the Google quantum supremacy experimants.

\printbibliography

@article{S-Z,
   title={Bures volume of the set of mixed quantum states},
   volume={36},
   ISSN={1361-6447},
   url={http://dx.doi.org/10.1088/0305-4470/36/39/308},
   DOI={10.1088/0305-4470/36/39/308},
   number={39},
   journal={Journal of Physics A: Mathematical and General},
   publisher={IOP Publishing},
   author={Sommers, Hans-Jurgen and Zyczkowski, Karol},
   year={2003},
   month=Sept, pages={10083–10100} }

@book{A-GZ-V,
title={Singularities of Differentiable Maps. Volume2},
publisher={Birkhauser},
author={V.I. Arnold and S.M. Gusein-Zade and A.N. Varchenko},
year={1998}  }

@book{Ch,
title={Riemannian Geometry. A Modern Introduction. Second Edition},
publisher={Cambridge Univwersity Press},
author={I.Chavel},
year={2006}  }

@book{dirac,
title={The principles of Quantum Mechanics},
publisher={Orford Univwersity Press},
author={ P. A. M. Dirac },
year={1930  }}

@book{schw,
title={Theorie des Distributions vol, 1 and 2},
publisher={Hermann, Paris},
author={ L.Schwratz},
year={1950-1951  }}

@article{di-fr,
title= {A dozen de Finetti‐style results: In search of a theory},
journal={ Ann. Inst. H. Poincaré},
author={P. Diaconis and D. Freedman},
volume={23},
pages={397-423},
year={1987}}

@article{google,
title= {Quantum supremacy using a programmable superconducting processor},
journal={ Nature},
author={F.Arute et al.},
volume={574},
pages={505-510},
year={2019}}

@article{bo,
title= {Characterizing quantum supremacy in near-term devices},
journal={ Nature Physics},
author={S.Boxio},
volume={14},
pages={905-600},
year={2018}}

@book{G-S,
title={Generalized Functions, Volume 2: Spaces of Fundamental and Generalized Functions},
publisher={AMS Chelsea Publishing},
author={I. M. Gel'fand and G. E. Shilov},
year={1968}}
\end{document}